\documentclass[11pt]{article}
\usepackage[T1,T2A]{fontenc}
\usepackage[english,russian]{babel}
\usepackage{amsmath, amssymb, amsthm, amscd}
\usepackage{amsfonts}
\usepackage{float}
\usepackage{array}

\theoremstyle{plain}
\newtheorem{lemma}{Lemma}
\newtheorem{theorem}{Theorem}
\theoremstyle{remark}
\newtheorem{remark}{Remark}

\usepackage[a4paper, left=1.5cm, right=1.5cm, top=2cm, bottom=2cm]{geometry}

\title{Asymptotic probability of a fixed edge being on the boundary of the convex hull of a random walk in $\mathbb{Z}^2$}
\author{A.\,O.~Mysliuk\thanks{Mysliuk Aleksandr --- Postgraduate, Lomonosov Moscow State University, Faculty of Mechanics and Mathematics, Chair of Probability Theory, e-mail: aleksander.mysluk@gmail.com}}
\date{}

\begin{document}
\selectlanguage{english}
\maketitle


\vspace{0.5cm}
\textbf{Abstract.} A simple symmetric random walk in the space $\mathbb{Z}^2$ is considered. The asymptotic behavior as the number of jumps tends to infinity of the probability that a fixed edge of the random walk lies in the polygon that forms the boundary of the convex hull is investigated.

\textbf{Keywords:} random walks, convex hulls.

\section{Introduction and statement of the problem}

Asymptotic properties of convex hulls of random sets of points have been actively studied since the 1960s. Main results can be found in the works of G.~Baxter [1,2], D.~Zaporozhets and V.~Vysotsky [3], A.~Rényi and R.~Sulanke [4,5], F.~Spitzer and H.~Widom [6,7], B.~Efron [8], Sparre Andersen [11,12] and many others. In the cited works, random walks in $\mathbb{R}^d$ are considered. We have not found any works devoted to the study of convex hulls of random walks on multidimensional lattices. Despite the fact that a walk in $\mathbb{Z}^d$ is a particular case of a walk in $\mathbb{R}^d$, a direct transfer of results from $\mathbb{R}^d$ to $\mathbb{Z}^d$ is not always possible, since the main necessary requirements imposed on the jumps of the random walk may be violated. One of the main constraints used for jumps is the continuity of the distribution, which is immediately violated on lattices. Some results can be transferred to $\mathbb{Z}^d$. For example, in [10] limit theorems for convex hulls are formulated under assumptions that allow considering the lattice case. However, many combinatorial-geometric characteristics of the convex hull cannot be directly transferred. Such characteristics include, for instance, the number of edges on the boundary of the convex hull or the asymptotic probability that an edge belongs to the boundary, which is the subject of the present work.

\section{Main results}

We consider a simple symmetric random walk $S_n = \sum_{k=1}^n X_k$, $S_0:=\boldsymbol{0}$ on the two-dimensional lattice $\mathbb{Z}^2$. The jumps $X_i$ are independent and identically distributed according to the law
\[
\mathbb{P}(X_i = (1,0)) = \mathbb{P}(X_i = (-1,0)) = \mathbb{P}(X_i = (0,1)) = \mathbb{P}(X_i = (0,-1)) = \frac14.
\]

Let $C_n$ denote the convex hull of the points $S_0,\dots,S_n$. Let $\partial C_n$ denote the boundary of the convex hull $C_n$, which is a convex polygon. Any two adjacent vertices of $\partial C_n$ correspond to two partial sums of the random walk $S_{n_1}$ and $S_{n_2}$, $0\leq n_1 < n_2 \leq n$.
In this work we investigate the asymptotic behavior of the probability $\mathbb{P}(S_{n_1}S_{n_2}\in\partial C_n)$ for fixed $n_1,n_2$ and $n\to\infty$.

To prove the main result, we need the following lemma for a random walk on $\mathbb{Z}$.

\begin{lemma}
Let $k,m$ be two positive integers, at least one of them nonzero. $S_n = \sum_{i=1}^n X_i$, $S_0:=\boldsymbol{0}$, where the one-dimensional jumps $X_i$ are independent, identically distributed according to the law:
\[
\mathbb{P}(X_i = k) = \mathbb{P}(X_i = -k) = \mathbb{P}(X_i = m) = \mathbb{P}(X_i = -m) = \frac14.
\]
Let $N_n$ denote the number of positive partial sums among $S_1,\dots,S_n$, and set $p_n = \mathbb{P}(N_n = n)$, $p_0 = 1$. Then the asymptotic equality holds
\[
p_n \sim \frac{C_{k,m}}{\sqrt{\pi n}},\quad n\to\infty,
\]
where
\[
C_{k,m} = \exp\left(\frac1{4\pi}\int_{-\pi}^{\pi}\ln\bigl(1 - \tfrac12(\cos k\theta + \cos m\theta)\bigr)\,d\theta\right).
\]
\end{lemma}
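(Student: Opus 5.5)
We will derive this from the Sparre Andersen--Baxter fluctuation identity together with a Tauberian argument. The identity holds for arbitrary i.i.d. one-dimensional jumps, with no continuity assumption:
\[
\sum_{n=0}^\infty p_n s^n=\exp\Bigl(\sum_{n=1}^\infty \frac{s^n}{n}\,\mathbb{P}(S_n>0)\Bigr),\qquad |s|<1 .
\]
Since $X_i$ is symmetric, $\mathbb{P}(S_n>0)=\tfrac12\bigl(1-\mathbb{P}(S_n=0)\bigr)$. Hence
\[
P(s):=\sum_{n\ge0}p_ns^n=(1-s)^{-1/2}\exp\Bigl(-\frac12\sum_{n\ge1}\frac{s^n}{n}\mathbb{P}(S_n=0)\Bigr).
\]
(Here we read the hypothesis as $k,m\ge0$, not both zero. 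If one of them is zero, $X_i=0$ has positive probability and the formula is still correct.)

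The next step evaluates the exponent at $s=1$. Write $\varphi(\theta)=\tfrac12(\cos k\theta+\cos m\theta)$ for the characteristic function of $X_i$. Then $\mathbb{P}(S_n=0)=\frac1{2\pi}\int_{-\pi}^{\pi}\varphi(\theta)^n\,d\theta$, and
\[
\sum_{n\ge1}\frac{s^n}{n}\mathbb{P}(S_n=0)=-\frac1{2\pi}\int_{-\pi}^{\pi}\ln\bigl(1-s\varphi(\theta)\bigr)\,d\theta .
\]
We then let $s\uparrow1$. The sum converges at $s=1$ because $\mathbb{P}(S_n=0)=O(n^{-1/2})$ by the local CLT, since the walk has finite variance $(k^2+m^2)/2$. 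The integral converges because the singularities of $\ln(1-\varphi)$ are logarithmic: they sit at the finitely many $\theta$ with $\varphi(\theta)=1$, near which $1-\varphi\asymp(\theta-\theta_0)^2$. Monotone convergence on the series side, and dominated convergence on the integral side, give
\[
\exp\Bigl(-\tfrac12\sum_{n\ge1}\tfrac1n\mathbb{P}(S_n=0)\Bigr)=\exp\Bigl(\frac1{4\pi}\int_{-\pi}^{\pi}\ln(1-\varphi(\theta))\,d\theta\Bigr)=C_{k,m}.
\]
If $\gcd(k,m)=d>1$, the walk lives on $d\mathbb{Z}$ and $\varphi$ has several maxima. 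The formula above is still valid; this is a small point to check, not a real problem.

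Consequently $P(s)\sim C_{k,m}(1-s)^{-1/2}$ as $s\uparrow1$. The Hardy--Littlewood--Karamata Tauberian theorem then gives $\sum_{j\le n}p_j\sim C_{k,m}\cdot 2\sqrt{n/\pi}=C_{k,m}\,n^{1/2}/\Gamma(3/2)$. Because $p_n$ is nonincreasing in $n$ (the event $\{N_n=n\}$ decreases), the monotone density version of the theorem converts this to
\[
p_n\sim \frac{C_{k,m}}{\Gamma(1/2)}\,n^{-1/2}=\frac{C_{k,m}}{\sqrt{\pi n}} .
\]

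The main obstacle is justifying the limit $s\uparrow1$ in the second step. On the series side this requires the $O(n^{-1/2})$ local bound for $\mathbb{P}(S_n=0)$ in the possibly periodic lattice case. On the integral side it requires integrability of $\ln(1-\varphi)$ near its zeros. The zeros can also be handled by dominating $|\ln(1-s\varphi)|\le|\ln(1-\varphi)|+\ln 2$ for $\varphi\ge0$ and bounding the region $\varphi<0$ directly. Once this is done, the Tauberian step is routine given the monotonicity of $p_n$.
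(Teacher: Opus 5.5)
Your proof is correct and follows the same route as the paper: the Sparre Andersen identity, the symmetry relation $\mathbb{P}(S_n>0)=\tfrac12\bigl(1-\mathbb{P}(S_n=0)\bigr)$, and the Fourier representation of $\mathbb{P}(S_n=0)$ give $P(s)=(1-s)^{-1/2}\exp\bigl(\frac{1}{4\pi}\int_{-\pi}^{\pi}\ln(1-s\varphi(\theta))\,d\theta\bigr)$, and then $s\uparrow1$. The only difference is that you explicitly justify the passage to $s=1$ (integrable logarithmic singularities, dominated convergence) and the Tauberian step (Karamata plus the monotone density theorem, using that $p_n$ is nonincreasing), which the paper compresses into ``letting $s\to1-$''.
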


\begin{proof}
Let $P(s) = \sum_{n=0}^\infty p_n s^n$ denote the generating function of the sequence $p_n$. For convenience set $a_n = \mathbb{P}(S_n >0)$, $h_n = \mathbb{P}(S_n = 0)$. Let $A(s) = \sum_{n=1}^\infty a_n s^n$, $H(s) = \sum_{n=1}^\infty h_n s^n$ denote the corresponding generating functions of the sequences $a_n$ and $h_n$. By Theorem 1 of [12] for $|s|<1$ we have
\begin{equation}
P(s) =\exp\left({\int_0^s \frac{A(\sigma)}{\sigma}\,d\sigma}\right).
\end{equation}
Let's find $A(s)$. By the symmetry of the random walk we have
\[
a_n = \mathbb{P}(S_n >0) = \frac12(1 - \mathbb{P}(S_n=0)) = \frac12(1-h_n),
\]
which yields a relation for the generating functions. For $|s|<1$ we have
\begin{equation}
A(s) = \frac12\left(\frac{1}{1-s} - H(s)\right).
\end{equation}
To compute $h_n$, one must count the number of paths ending at $0$ and divide by the total number of paths $4^n$. This number of paths equals the number of ways to represent $0$ as a sum of $n$ summands $\pm k$, $\pm m$, which equals the coefficient of $t^0$ after expanding $(t^{-k} + t^{-m} + t^{m} + t^{k})^n$. Denote $f(t):=(t^{-k} + t^{-m} + t^{m} + t^{k})$. Thus $H(s)$ takes the form
\[
H(s) = \sum_{n=1}^\infty ([t^0]f(t)^n)\left(\frac{s}4\right)^n,
\]
where $[t^0]f(t)^n$ denotes the coefficient of $t^0$ in $f(t)^n$. This coefficient can be computed using Cauchy's integral formula
\[
[t^0]f(t)^n = \frac1{2\pi i}\oint_{|t| = 1} \frac{f(t)^n}{t}\,dt,
\]
hence we obtain
\[
H(s) = \sum_{n=1}^\infty \left(\frac{s}4\right)^n\frac{1}{2\pi i}\oint_{|t| = 1} \frac{f(t)^n}{t}\,dt.
\]
For small values of $s$ ($|s|<1$) we can interchange integration and summation in the standard way and apply the formula for the sum of a geometric series
\[
H(s) = \frac1{2\pi i}\oint_{|t| = 1}\frac1t\left(\sum_{n=0}^\infty \left(\frac{sf(t)}4\right)^n\right)dt = \frac1{2\pi i} \oint_{|t| = 1} \frac1t\frac{1}{\left(1-\frac{sf(t)}4\right)}\, dt.
\]
After the substitution $t=e^{i\theta}$ we get
\[
H(s)=\frac1{2\pi}\int_{-\pi}^{\pi}\frac{d\theta}{1 - \frac{s}2(\cos k\theta + \cos m\theta)}.
\]
Substituting $H(s)$ into (2) yields the expression for $A(s)$
\[
A(s) = \frac12\left(\frac{1}{1-s} - \frac1{2\pi}\int_{-\pi}^{\pi}\frac{d\theta}{1 - \frac{s}2(\cos k\theta + \cos m\theta)}\right).
\]
Now compute
\begin{align*}
\int_{0}^s \frac{A(\sigma)}{\sigma}\,d\sigma &= \frac12\int_0^s\left(\frac1{\sigma(1-\sigma)} - \frac{H(\sigma)}{\sigma}\right)d\sigma \\
&= \frac12\int_0^s\left(\frac1\sigma - \frac1\sigma + \frac1{1-\sigma} - \frac{H(\sigma) - 1}{\sigma}\right)d\sigma \\
&= \frac12\int_0^s\frac{d\sigma}{1-\sigma} - \frac12\int_0^s\frac{H(\sigma)-1}{\sigma}\,d\sigma.
\end{align*}
For $0<s<1$ from (1) we obtain the final result
\begin{align*}
P(s) &= \exp\left(\int_0^s \frac{A(\sigma)}{\sigma}d\sigma\right) \\
&= \exp\left(-\frac12\ln(1-s) - \frac{1}{4\pi}\int_0^s\int_{-\pi}^{\pi}\frac{1}{\sigma}\left(\frac{1}{1 - \frac{\sigma}2(\cos k\theta + \cos m\theta)} - 1\right)d\theta\,d\sigma\right) \\
&= \exp\left(-\frac12\ln(1-s) - \frac{1}{4\pi}\int_{-\pi}^{\pi}\int_0^s\frac{\cos k\theta + \cos m\theta}{2-\sigma(\cos k\theta + \cos m\theta)}\,d\sigma\,d\theta\right) \\
&= \exp\left(-\frac12\ln(1-s) + \frac{1}{4\pi}\int_{-\pi}^{\pi}\ln\!\Bigl(1-\frac{s}2(\cos k\theta + \cos m\theta)\Bigr)d\theta\right).
\end{align*}
Thus,
\[
P(s) = \frac{1}{\sqrt{1-s}}\exp\left(\frac{1}{4\pi}\int_{-\pi}^{\pi}\ln\!\Bigl(1-\frac{s}2(\cos k\theta + \cos m\theta)\Bigr)d\theta\right),\qquad 0<s<1.
\]
Letting $s\to1-$ we obtain
\[
p_n \sim \frac{C_{k,m}}{\sqrt{\pi n}},\quad n\to\infty,
\]
where $C_{k,m} = \exp\left(\frac1{4\pi}\int_{-\pi}^{\pi}\ln\bigl(1 - \tfrac12(\cos k\theta + \cos m\theta)\bigr)d\theta\right)$. The lemma is proved.
\end{proof}

\begin{remark}
The formula for $C_{k,m}$ remains valid when $k=m$. In that case we obtain the well-known result for a walk with steps $\pm1$: $p_n\sim\frac{1}{\sqrt{2\pi n}},\ n\to\infty$.
\end{remark}

Now we prove the main result for the two-dimensional simple symmetric random walk described earlier.

\begin{theorem}
Let $n_1,n_2$ be fixed time instants $0\leq n_1<n_2 \leq n$. Then as $n\to\infty$ the asymptotic equality holds
\[
\mathbb{P}(S_{n_1}S_{n_2}\in\partial C_n) \sim \frac{C_{n_1,n_2}}{\sqrt{n}},\quad n\to\infty,
\]
where $C_{n_1,n_2}$ is some positive constant.
\end{theorem}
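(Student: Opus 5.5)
Condition on the increment $v = S_{n_2}-S_{n_1}=(a,b)$. This is a vector of $\mathbb{Z}^2$ that takes finitely many values, each with positive probability, and the case $v=\boldsymbol{0}$ contributes nothing asymptotically. If $S_{n_1}S_{n_2}$ is an edge of $\partial C_n$, all points $S_0,\dots,S_n$ lie in one closed half-plane bounded by the line $\ell$ through $S_{n_1}$ and $S_{n_2}$. Let $u=(-b,a)$ be a normal to $v$. Each projection $\langle S_j - S_{n_1}, u\rangle$ is a sum of one-dimensional jumps $\langle X_i,u\rangle$. These jumps are i.i.d. with the law $\pm b$, $\pm a$, each with probability $1/4$, which is exactly the walk of Lemma~1 with $k=|b|$, $m=|a|$. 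One of $k,m$ may vanish; the Lemma allows this, since at least one is nonzero.

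The event splits into three independent blocks.
\begin{itemize}
\item[(i)] The inner block $j\in[n_1,n_2]$, which has fixed length.
\item[(ii)] The backward block $S_{n_1-1},\dots,S_0$, also of fixed length.
\item[(iii)] The forward block $S_{n_2+1},\dots,S_n$, of length $n-n_2\to\infty$.
\end{itemize}
For the edge to be on the boundary with, say, $\langle S_j - S_{n_1},u\rangle\ge 0$ for all $j$, we also need that no other lattice point of the path on $\ell$ lies outside the segment in a way that makes $S_{n_1}S_{n_2}$ only part of a longer edge. I would handle this in one of two ways. The first is to interpret ``edge'' as the segment lying on the boundary, which needs only weak inequalities. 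The second is to require strict positivity outside the blocks of fixed length, with equalities on $\ell$ confined to the segment. The cleanest concrete choice is this: the projections of the forward increments $S_{n_2+j}-S_{n_2}$ are strictly positive for all $1\le j\le n-n_2$, or alternatively nonnegative with the points on $\ell$ staying on the correct side. With the strict choice the forward factor equals $p_{n-n_2}$ of Lemma~1 for the pair $(k,m)=(|b|,|a|)$. The Lemma then gives the asymptotics $C_{|b|,|a|}/\sqrt{\pi(n-n_2)}\sim C_{|b|,|a|}/\sqrt{\pi n}$. The weak-inequality version also decays like $c/\sqrt n$: by the same Sparre Andersen identity, with $\mathbb{P}(S_n\ge 0)$ in place of $\mathbb{P}(S_n>0)$, it is a constant multiple of $1/\sqrt{n}$.

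By the Markov property, the probability factors, for each fixed $v$ and each side $\pm u$, as
\[
q(v,\pm)\cdot \mathbb{P}(\text{forward block stays on the correct side}),
\]
where $q(v,\pm)>0$ is a finite-path probability covering blocks (i) and (ii) together with $\{S_{n_2}-S_{n_1}=v\}$. The two sides are disjoint unless all points are collinear, and that collinear event has exponentially small probability. Summing over the finitely many $v$ and both orientations gives
\[
\mathbb{P}(S_{n_1}S_{n_2}\in\partial C_n)\sim \frac{1}{\sqrt{\pi n}}\sum_{v,\pm} q(v,\pm)\,\widetilde C_{v},
\]
where $\widetilde C_v$ is the constant from Lemma~1 or its weak analogue.

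Positivity of the constant follows by exhibiting one admissible $v$ and one admissible finite configuration. Take $n_2-n_1$ steps all equal to $(1,0)$, so that the inner points are collinear on the segment. Take the backward steps as $(0,1)$, so that $S_j$ lies strictly above $\ell$ for $j<n_1$. This has positive probability, and $\widetilde C_v>0$, since the integrand $\ln(1-\tfrac12(\cos k\theta+\cos m\theta))$ is integrable: its singularities are only logarithmic.

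The main obstacle is the precise combinatorial definition of the event. Lattice collinearity means several path points can lie on $\ell$, both inside the segment and in the forward and backward blocks. One must also check that $S_{n_1}$ and $S_{n_2}$ are genuinely adjacent vertices, rather than merely points on a longer boundary edge. I would resolve this by decomposing according to which of the finitely many early points lie on $\ell$. For the infinite forward block, I would decompose according to the first return to $\ell$ and show that the contributions of strict versus weak positivity differ only by a constant factor. Either way the limit constant exists and the order $n^{-1/2}$ is unaffected.
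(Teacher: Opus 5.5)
Your proposal is correct and follows the paper's route: you condition on the increment $S_{n_2}-S_{n_1}$ (the paper conditions on its slope $(k,m)$), project onto the normal to get the walk of Lemma~1, factor into two finite blocks and the forward block, apply Lemma~1 to the forward block, and sum over the finitely many increments. Your handling of path points lying on the line $\ell$ is more careful than the paper's, which simply asserts that all other projections are strictly positive; for the adjacent-vertex reading, the step you only sketch closes by decomposing the forward event at the last visit to $\ell$, which gives it the generating function $R(s)P(s)$ with $R(1)<\infty$, so the Tauberian step of Lemma~1 applies unchanged (these events decrease in $N$).
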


\begin{proof}
Let $X_1^{(k,m)},\dots,X_n^{(k,m)}$ denote the projections of the jumps $X_1,X_2,\dots,X_n$ onto the line $y=\frac{k}{m}x$ orthogonal to the segment $S_{n_1}S_{n_2}$. Let $S_0^{(k,m)},\dots,S_n^{(k,m)}$ denote the projections of the partial sums $S_0,\dots,S_n$. Up to a constant factor, $X_i^{(k,m)}$ has the distribution
\[
\mathbb{P}(X_i^{(k,m)} = k) = \mathbb{P}(X_i^{(k,m)} = -k) = \mathbb{P}(X_i^{(k,m)} = m) = \mathbb{P}(X_i^{(k,m)} = -m) = \frac14.
\]

Introduce a coordinate axis on the line $y=\frac{k}{m}x$ such that the projections $S_{n_1}^{(k,m)}$ and $S_{n_2}^{(k,m)}$ are equal to $0$. If the edge $S_{n_1}S_{n_2}$ is an edge of $\partial C_n$, then the projections $S_0^{(k,m)},\dots,S_{n_1-1}^{(k,m)},S_{n_1+1}^{(k,m)},\dots,S_{n_2-1}^{(k,m)},S_{n_2+1}^{(k,m)},\dots,S_n^{(k,m)}$ have the same sign. By symmetry, without loss of generality, we consider positive sums. Thus,
\[
\mathbb{P}(S_{n_1}S_{n_2}\in\partial C_n \mid S_{n_1}S_{n_2}\parallel(k,m)) = 2\cdot\mathbb{P}_{0,n_1}^{(k,m)}\cdot\mathbb{P}_{n_1,n_2}^{(k,m)}\cdot\mathbb{P}_{n_2,n}^{(k,m)},
\]
where
\begin{align*}
\mathbb{P}_{0,n_1}^{(k,m)} &= \mathbb{P}(S_0^{(k,m)}>0,\dots, S_{n_1-1}^{(k,m)} >0 \mid S_{n_1}^{(k,m)} = 0),\\
\mathbb{P}_{n_1,n_2}^{(k,m)} &= \mathbb{P}(S_{n_1+1}^{(k,m)}>0,\dots, S_{n_2-1}^{(k,m)} >0 \mid S_{n_1}^{(k,m)} = S_{n_2}^{(k,m)} = 0),\\
\mathbb{P}_{n_2,n}^{(k,m)} &= \mathbb{P}(S_{n_2+1}^{(k,m)}>0,\dots, S_{n}^{(k,m)} >0 \mid S_{n_2}^{(k,m)} = 0).
\end{align*}
By Lemma 1 we obtain
\begin{equation}
\mathbb{P}_{n_2,n}^{(k,m)}\sim \frac{C_{k,m}}{\sqrt{\pi (n-n_2)}},\quad n\to\infty.
\end{equation}

Let $\Omega_{n_1,n_2}$ denote the set of possible slope coefficients $(k,m)$ of the vector $S_{n_1}S_{n_2}$. Denote by $F(x,y)$ the distribution function of the possible pairs $(k,m)$. Integrating the conditional probability over the set of possible pairs and applying (3), we have
\begin{align*}
\mathbb{P}(S_{n_1}S_{n_2}\in\partial C_n) &= \int_{\Omega_{n_1,n_2}} \mathbb{P}(S_{n_1}S_{n_2}\in\partial C_n \mid S_{n_1}S_{n_2}\parallel(k,m))\, dF(k,m) \\
&= \int_{\Omega_{n_1,n_2}} \frac{2\cdot\mathbb{P}_{0,n_1}^{(k,m)} \cdot \mathbb{P}_{n_1,n_2}^{(k,m)}\cdot C_{k,m}}{\sqrt{\pi(n-n_2)}}\, dF(k,m) \\
&\sim \int_{\Omega_{n_1,n_2}} \frac{2\cdot\mathbb{P}_{0,n_1}^{(k,m)} \cdot \mathbb{P}_{n_1,n_2}^{(k,m)}\cdot C_{k,m}}{\sqrt{n\pi}}\, dF(k,m),\quad n\to\infty.
\end{align*}
Hence it follows that
\[
\mathbb{P}(S_{n_1}S_{n_2}\in\partial C_n) \sim \frac{C_{n_1,n_2}}{\sqrt{n}},\quad n\to\infty,
\]
where $C_{n_1,n_2} = \int_{\Omega_{n_1,n_2}} \frac{2}{\sqrt{\pi}}\,\mathbb{P}_{0,n_1}^{(k,m)}\mathbb{P}_{n_1,n_2}^{(k,m)} C_{k,m}\, dF(k,m)$. Theorem 1 is proved.
\end{proof}

\section{Numerical simulation}

To confirm the analytical results, simulations were performed. For each number of steps $n$ the Monte Carlo method was used with $1\,000\,000$ runs. The table shows the ratio of the probability $\hat{\mathbb{P}}(N_n = n)$ obtained in the simulation to the analytical asymptotics. The theoretical and experimental values of the constant $C_{k,m}$ are also given.

\begin{table}[h]
\centering
\caption{$k = 1$, $m = 1$}
\begin{tabular}{|c|c|c|c|}
\hline
$n$ & $\hat{\mathbb{P}}(N_n = n) / \frac{C_{k,m}}{\sqrt{\pi n}}$ & $C_{k,m}$ theor. & $C_{k,m}$ sim.\\
\hline
100 & 0.9982 & 0.7072 & 0.7059\\
\hline
1000 & 0.9975 & 0.7072 & 0.7054\\
\hline
10000 & 1.0178 & 0.7072 & 0.7198\\
\hline
\end{tabular}
\end{table}

\begin{table}[h]
\centering
\caption{$k = 2$, $m = 1$}
\begin{tabular}{|c|c|c|c|}
\hline
$n$ & $\hat{\mathbb{P}}(N_n = n) / \frac{C_{k,m}}{\sqrt{\pi n}}$ & $C_{k,m}$ theor. & $C_{k,m}$ sim.\\
\hline
100 & 0.9960 & 0.8091 & 0.8058\\
\hline
1000 & 1.0006 & 0.8091 & 0.8096\\
\hline
10000 & 0.9913 & 0.8091 & 0.8020\\
\hline
\end{tabular}
\end{table}

\begin{table}[H]
\centering
\caption{$k = 5$, $m = 7$}
\begin{tabular}{|c|c|c|c|}
\hline
$n$ & $\hat{\mathbb{P}}(N_n = n) / \frac{C_{k,m}}{\sqrt{\pi n}}$ & $C_{k,m}$ theor. & $C_{k,m}$ sim.\\
\hline
100 & 0.9959 & 0.8896 & 0.8859\\
\hline
1000 & 1.0063 & 0.8896 & 0.8952\\
\hline
10000 & 1.0072 & 0.8896 & 0.8960\\
\hline
\end{tabular}
\end{table}

\section{Conclusions}

The asymptotic probability of a fixed edge being on the boundary of the convex hull has been obtained. The probability estimate obtained in the simulations is very close to the analytical result, which confirms its validity.

The author expresses gratitude to Professor E.B. Yarovaya for discussing the problem.

\end{document}